\newtheorem{theorem}{Theorem}[section]
\newtheorem{remark}[theorem]{Remark}
\newtheorem{lemma}[theorem]{Lemma}
\numberwithin{equation}{section}
\DeclareMathAlphabet{\mathcal}{OMS}{cmsy}{m}{n}
\let\mathbb\relax
\DeclareMathAlphabet{\mathbb}{U}{msb}{m}{n}
\def\tsc#1{\csdef{#1}{\textsc{\lowercase{#1}}\xspace}}
\begin{document}
	\let\WriteBookmarks\relax
	\def\floatpagepagefraction{1}
	\def\textpagefraction{.001}
	\let\printorcid\relax
	% Short title
	\shorttitle{A Markov representation of Perron-Frobenius eigenvector for infinite non-negative matrix and Metzler-matrix}    
	%
	%% Short author
	\shortauthors{Qian DU and Yong-Hua MAO}  
	
	% Main title of the paper
	\title [mode = title]{A Markov representation of Perron-Frobenius eigenvector for infinite non-negative matrix and Metzler-matrix}  
	
	% Title footnote mark
	% eg: \tnotemark[1]
	%\tnotemark[<tnote number>] 
	%
	%% Title footnote 1.
	%% eg: \tnotetext[1]{Title footnote text}
	%\tnotetext[<tnote number>]{<tnote text>} 
	
	% First author
	%
	% Options: Use if required
	% eg: \author[1,3]{Author Name}[type=editor,
	%       style=chinese,
	%       auid=000,
	%       bioid=1,
	%       prefix=Sir,
	%       orcid=0000-0000-0000-0000,
	%       facebook=<facebook id>,
	%       twitter=<twitter id>,
	%       linkedin=<linkedin id>,
	%       gplus=<gplus id>]
	
	%\author[<aff no>]{<author name>}[<options>]
	%
	%% Corresponding author indication
	%\cormark[<corr mark no>]
	%
	%% Footnote of the first author
	%\fnmark[<footnote mark no>]
	%
	%% Email id of the first author
	%\ead{<email address>}
	%
	%% URL of the first author
	%\ead[url]{<URL>}
	
	% Credit authorship
	% eg: \credit{Conceptualization of this study, Methodology, Software}
	%\credit{<Credit authorship details>}
	
	% Address/affiliation
	\author{Qian DU} 
	\ead{email: duqian@mail.bnu.edu.cn}
	\author{Yong-Hua MAO}
	
	\affiliation{organization={School of Mathematical Sciences},
		addressline={Beijing Normal University,
			Laboratory of Mathematics and Complex Systems, Ministry of Education}, 
		city={	Beijing},
		%          citysep={}, % Uncomment if no comma needed between city and postcode
		postcode={100875}, 
		country={People's Republic of 	China}}
	
	%\author[<aff no>]{<author name>}[<options>]
	
	% Footnote of the second author
	%\fnmark[2]
	%
	%% Email id of the second author
	%\ead{}
	%
	%% URL of the second author
	%\ead[url]{}
	%
	%% Credit authorship
	%\credit{}
	%
	%% Address/affiliation
	%\affiliation[<aff no>]{organization={},
		%            addressline={}, 
		%            city={},
		%%          citysep={}, % Uncomment if no comma needed between city and postcode
		%            postcode={}, 
		%            state={},
		%            country={}}
	%
	%% Corresponding author text
	%\cortext[1]{Corresponding author}
	%
	%% Footnote text
	%\fntext[1]{}
	
	% For a title note without a number/mark
	%\nonumnote{}
	
	% Here goes the abstract
	\begin{abstract}
	We will represent the so-called Perron-Frobenius eigenvector (if exists) for infinite non-negative matrix $A$ and Metzler matrix by using its corresponding Markov chain with probability transition function.
	\end{abstract}
	
	% Use if graphical abstract is present
	%\begin{graphicalabstract}
	%\includegraphics{}
	%\end{graphicalabstract}
	
	% Research highlights
	%\begin{highlights}
	%\item 
	%\item 
	%\item 
	%\end{highlights}
	
	% Keywords
	% Each keyword is seperated by \sep
	\begin{keywords}
		Perron-Frobenius eigenvector\sep
		Markov chain\sep
		non-negative matrix\sep
		Metzler matrix
	\end{keywords}
	
	\maketitle
	
	% Main text
	%\section{}\label{}
	\section{Introduction}
	Assume that $E$ is countable, let $A=(a_{ij})_{i,j\in E}$ be a non-negative and irreducible matrix.~Assume that $\sum_{j\in E}a_{ij}=:f_i<\infty$,~$\forall~i\in E.$ 
	Define the probability transition matrix $M=(m_{ij})_{i,j\in E}$ by 
	$$
	m_{ij}=a_{ij}/f_i,~~~~~\forall~i,~j\in E.
	$$
	Then we will represent the so-called Perron-Frobenius eigenvector (if exists) for $A$  by using the Markov chain $X$ with probability transition matrix $M$.~Let $\tau_{j}^+=\inf\{n\geq1:X_n=j\}$ be the first return time to $j\in E$. Introduce some notations and definitions concerning  matrix $A$.~Let $R$ be the convergence parameter for $A$:
	$$
	R=\inf\{r\geq0:\sum_{n=0}^{\infty}r^na_{ij}^{(n)}=\infty\}=\sup\{r\geq0:\sum_{n=0}^{\infty}r^na_{ij}^{(n)}<\infty\}.
	$$
	By irreducibility, $R$ is independent of $i,~j\in E$.
	Generally speaking,~$\sum_{n=0}^{\infty}R^{n}a_{ij}^{(n)}$~can be finite or infinite.
	$A$ is called $R$-recurrent if 
	$$
	\sum_{n=0}^{\infty}R^na_{ij}^{(n)}=\infty.
	$$
	 We have the following theorem for the infinite non-negative matrix, which is a generalization of the Perron-Frobenius eigenvector in \cite{2017markov} for a finite primitive matrix.
	\begin{theorem}\label{theorem 1.7}
		Fix some $k\in E$. Assume
		
		\begin{enumerate} 
			\item [(1)] $A$ is non-negative and irreducible on $E$;
			\item [(2)] $f(i)=\sum_{j\in E}a_{ij}<\infty$ for $i\in E$;
			\item [(3)] $A$ is $R$-recurrent with $R>0$.
		\end{enumerate} 
		Then 
		$$
		u_i=\mathbb{E}_k\left(\sum\limits_{n=0}^{\tau_{k}^{+}-1}\left(I_{[X_n=i]}R ^{n}\prod\limits_{m=0}^{n-1}f(X_m)\right)\right)\in (0,\infty),~~~~i\in E
		$$
		and $u=(u_i)_{i\in E}$ satisfies
		$$
		uA =(1/R )u.
		$$
	\end{theorem}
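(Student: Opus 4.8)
The plan is to read $u_i$ as an $R$-weighted Green's function of the Markov chain $X$ for excursions away from $k$, and to run the classical excursion construction of a stationary measure while carrying along the multiplicative weights $\prod_m f(X_m)$ and the powers of $R$. First I would set up the dictionary between powers of $A$ and the chain: writing $a_{ij}=f(i)m_{ij}$ and expanding the matrix product gives, for every $n\ge 0$,
$$
a_{ij}^{(n)}=\mathbb{E}_i\left(I_{[X_n=j]}\prod_{m=0}^{n-1}f(X_m)\right),
$$
and since hypothesis (3) forces $R>0$, the series $\sum_n r^n a_{ij}^{(n)}$ converges for $0\le r<R$, so each $a_{ij}^{(n)}$ is finite; the same will hold for every taboo sum below. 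I would then introduce
$$
\phi(j,n)=\mathbb{E}_k\left(I_{[X_n=j,\ \tau_k^+>n]}\prod_{m=0}^{n-1}f(X_m)\right),\qquad
g_n=\mathbb{E}_k\left(I_{[\tau_k^+=n]}\prod_{m=0}^{n-1}f(X_m)\right),
$$
the $k$-taboo versions ${}_k a_{kj}^{(n)}$ and ${}_k a_{kk}^{(n)}$ of the powers of $A$, which are finite for every $n$ since they are bounded above by $a_{kj}^{(n)}$ and $a_{kk}^{(n)}$; with this notation $u_j=\sum_{n\ge 0}R^n\phi(j,n)$, and since $\{X_n=k,\ \tau_k^+>n\}$ is empty for $n\ge 1$ while $\phi(k,0)=1$, one already reads off $u_k=1$.

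The second ingredient I would establish is $\sum_{n\ge 1}R^n g_n=1$. Decomposing a loop at $k$ at the first return yields the renewal identity $a_{kk}^{(n)}=\sum_{\ell=1}^{n}g_\ell\, a_{kk}^{(n-\ell)}$ for $n\ge 1$, hence $A_{kk}(r):=\sum_{n\ge 0}r^n a_{kk}^{(n)}=\big(1-\sum_{n\ge 1}r^n g_n\big)^{-1}$ for $0\le r<R$; letting $r\uparrow R$, monotone convergence together with $R$-recurrence (so that $A_{kk}(R)=\infty$) forces $\sum_{n\ge 1}R^n g_n=1$. This is the standard characterization of $R$-recurrence and could alternatively be quoted; the only thing needing care is whether $R$ is inside the radius of convergence, which is handled by working with the increasing limits $A_{kk}(r)\uparrow A_{kk}(R)$ and $\sum_n r^n g_n\uparrow\sum_n R^n g_n$.

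The main computation I would carry out in $[0,\infty]$, so that Tonelli legitimises every interchange and no convergence has to be checked beforehand. Using $a_{ji}=f(j)m_{ji}$, absorbing $f(j)=f(X_n)$ into the product on the event $\{X_n=j\}$, and applying the Markov property at time $n$, I get
$$
\phi(j,n)\,a_{ji}=\mathbb{E}_k\left(I_{[X_n=j,\ X_{n+1}=i,\ \tau_k^+>n]}\prod_{m=0}^{n}f(X_m)\right).
$$
Summing over $j$ and splitting $\{\tau_k^+>n\}=\{\tau_k^+>n+1\}\sqcup\{\tau_k^+=n+1\}$, and noting $X_{n+1}=k$ on the second event, yields $\sum_{j}\phi(j,n)\,a_{ji}=\phi(i,n+1)+\delta_{ik}\,g_{n+1}$. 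Multiplying by $R^{n+1}$, summing over $n\ge 0$, reindexing, and inserting $\phi(i,0)=\delta_{ik}$ and $\sum_{m\ge 1}R^m g_m=1$ then gives
$$
R\sum_{j\in E}u_j\,a_{ji}=\big(u_i-\delta_{ik}\big)+\delta_{ik}=u_i,
$$
that is, $uA=(1/R)u$ as an identity in $[0,\infty]$. The delicate step here will be this excursion bookkeeping: the boundary term $\delta_{ik}g_{n+1}$, produced by the first return to $k$, must cancel exactly against the $n=0$ term $\phi(i,0)=\delta_{ik}$, and getting the taboo events to line up correctly under the Markov property is where an error would hide.

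Finally I would prove $u_i\in(0,\infty)$. For positivity, iterating the relation just obtained (still valid in $[0,\infty]$) gives $u_i=R^m\sum_j u_j\, a_{ji}^{(m)}\ge R^m u_k\, a_{ki}^{(m)}=R^m a_{ki}^{(m)}$, and by irreducibility $a_{ki}^{(m)}>0$ for some $m$, while $R>0$; hence $u_i>0$. For finiteness, the relation $u_i=R\sum_j u_j a_{ji}$ shows $u_i<\infty\Rightarrow u_j<\infty$ whenever $a_{ji}>0$; starting from $u_k=1<\infty$ and walking backwards along a path from an arbitrary state to $k$ (which exists by irreducibility) gives $u_j<\infty$ for all $j$. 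Apart from the two flagged points — the identity $\sum_n R^n g_n=1$ and the excursion bookkeeping — everything reduces to Tonelli and the Markov property.
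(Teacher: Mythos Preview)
Your proof is correct and follows essentially the same route as the paper: both identify $u_i$ with $\sum_{n\ge 0} R^n\,{}_k a_{ki}^{(n)}$, use the renewal identity $\sum_{n\ge 1} R^n\,{}_k a_{kk}^{(n)}=1$ to handle the boundary term at $k$, and verify $uA=(1/R)u$ via a one-step Markov decomposition. The only minor difference is the finiteness argument: the paper first bounds $\sum_n R^n\,{}_k a_{ki}^{(n)}$ directly via ${}_k a_{ki}^{(n)}\cdot{}_k a_{ik}^{(m)}\le{}_k a_{kk}^{(m+n)}$ and then computes $uA$, whereas you establish $uA=(1/R)u$ in $[0,\infty]$ first and then propagate finiteness from $u_k=1$ along the chain using irreducibility.
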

	\begin{remark}
		We can generalize Theorem \ref{theorem 1.7} in some sense.~Suppose there exists a positive vector $\alpha=(\alpha_i)$ such that $\sum_{j}a_{ij}\alpha_{j}=M_i<\infty$. Let $\widetilde{a}_{ij}=a_{ij}\alpha_j,~\forall~i,~j\in E$. 
		Then Theorem \ref{theorem 1.7} holds for matrix $\widetilde{A}=(\widetilde a_{ij})_{i,j\in E}.$
	\end{remark}
Next we  represent $u$ for a Metzler matrix $M=(m_{ij})_{i,~j\in  E}:~m_{ij}>0,~i\not=j,~\forall~i,~j\in E$. Assume that $M$ is irreducible:~$\exists$ different $i=i_0,~\cdots,~i_m=j\in {E}$ such that $m_{ii_1}\cdots m_{i_{m-1}j}>0$. Assume $d=\sup_{i\in E}m_{ii}<\infty$. 
 Denote $d_i=\sum_{j\in E}m_{ij}$. Assume $d_i<\infty$ for $i\in E$. Let $q_{ij}=m_{ij}-d_i\delta_{ij}$. Then $Q=(q_{ij})_{ij\in E}$ is conservative and irreducible $Q$-matrix, with corresponding minimal $Q$-process $(X_t)_{t\geq 0}$. 
 Let $\lambda_d=1/R_d-d$, where $R_d$ is the convergence parameter of $M+dI$. We will prove that  $\lambda>m_{ii},~i\in E$ in Lemma \ref{exist}.
 To represent $u$, we need $\bar{M}=(\bar{m}_{ij})_{i,~j\in E}$ similar to embedded chain 
 \begin{equation}
 	\bar{m}_{ij}=\left\{
 	\begin{aligned}
 		\frac{m_{ij}}{\lambda-m_{ii}},~~~&i\not=j,\\
 		0,~~&i=j.
 	\end{aligned}
 	\right.
 \end{equation} 
  Let $\sigma_{j}^+=\inf\{t\geq\xi_1:X_t=j\}$ be the first return time to $j\in E$,~where $\xi_1=\inf\{t>0:X_t\not=X_0\}.$ Denote $q_i=-q_{ii},~i\in E$ by convention. We have the following theorem.
\begin{theorem}
	 Fix $k\in E$. Assume
	 \begin{enumerate}
	 	\item [(1)] $M=(m_{ij})$ is irreducible Metzler matrix on $E$;
	 	\item [(2)] $d_i=\sum_{j\in E}m_{ij}<\infty$ for $i\in E$;
	 	\item [(3)] $d=\sup_{i\in E}m_{ii}<\infty$.
	 	\item [(4)] $\bar{M}$ is recurrent on $E$.
	 \end{enumerate}Then
	$$u_i=\mathbb{E}_j\int_{0}^{\infty}e^{-\lambda t+\int_{0}^{t}d_{X_s}ds}I_{[X_t=j,~t<\sigma_{j}^+]}dt,~~~~i\in E$$
	and $u=(u_i)_{i\in E}$ satisfies $uM=\lambda u$.
\end{theorem}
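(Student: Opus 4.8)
The plan is to reduce this continuous-time statement to its discrete-time counterpart, Theorem~\ref{theorem 1.7}, applied to the non-negative matrix $\bar M$. Let $(X_t)$ be the minimal $Q$-process started at the fixed state $k$ (this is the meaning of $\mathbb{E}_k$), and let $(Y_n)_{n\ge0}$ be its embedded jump chain: the chain $(Y_n)$ has transition probabilities $\pi_{ij}=q_{ij}/q_i=m_{ij}/(d_i-m_{ii})$ for $i\ne j$ and $\pi_{ii}=0$, and given $(Y_n)$ the successive holding times $S_0,S_1,\dots$ are independent, $S_n\sim\mathrm{Exp}(q_{Y_n})$, with $T_n=\sum_{m<n}S_m$ the $n$-th jump time. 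Since $\sigma_k^+=T_{\tau_k^+}$, where $\tau_k^+:=\inf\{n\ge1:Y_n=k\}$ is the first return time of the jump chain to $k$, the random set $\{t<\sigma_k^+\}$ equals the disjoint union of the sojourn intervals $[T_n,T_{n+1})$, $0\le n<\tau_k^+$ (interpreted as ranging over all $n\ge0$, up to the explosion time, on the event that the jump chain never returns to $k$).

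First I would decompose the time integral over sojourns. On $[T_n,T_{n+1})$ one has $X_t\equiv Y_n$ and $\int_0^t d_{X_s}\,ds=\sum_{m<n}d_{Y_m}S_m+d_{Y_n}(t-T_n)$, so substituting $t=T_n+s$ gives
$$\int_{T_n}^{T_{n+1}} e^{-\lambda t+\int_0^t d_{X_s}ds}\,dt=\Big(\prod_{m=0}^{n-1}e^{(d_{Y_m}-\lambda)S_m}\Big)\int_0^{S_n}e^{(d_{Y_n}-\lambda)s}\,ds .$$
All integrands being non-negative, Tonelli lets me apply $\mathbb{E}_k$, sum over $n<\tau_k^+$ against $I_{[Y_n=i]}$, then condition on $(Y_n)$ and integrate out the conditionally independent holding times. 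Put $f(i):=\sum_j\bar m_{ij}=(d_i-m_{ii})/(\lambda-m_{ii})=q_i/(\lambda-m_{ii})$, which is finite and positive because $d_i<\infty$ and, by Lemma~\ref{exist}, $\lambda>m_{ii}$. This last inequality is exactly $d_i-\lambda<q_i$, so for $S\sim\mathrm{Exp}(q_i)$ one has $\mathbb{E}[e^{(d_i-\lambda)S}]=q_i/(\lambda-m_{ii})=f(i)<\infty$ and, by a one-line computation, $\mathbb{E}\big[\int_0^S e^{(d_i-\lambda)s}\,ds\big]=1/(\lambda-m_{ii})$ (reducing to $\mathbb{E}[S]=1/q_i$ when $d_i=\lambda$). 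Hence
$$u_i=\frac{1}{\lambda-m_{ii}}\,\mathbb{E}_k\Big(\sum_{n=0}^{\tau_k^{+}-1} I_{[Y_n=i]}\prod_{m=0}^{n-1}f(Y_m)\Big)=:\frac{\nu_i}{\lambda-m_{ii}}.$$

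Next I would recognize $\nu_i$ as the quantity furnished by Theorem~\ref{theorem 1.7} for $\bar M$ and conclude. The matrix $\bar M$ is non-negative and irreducible (indeed $\bar m_{ij}=m_{ij}/(\lambda-m_{ii})>0$ for all $i\ne j$), has finite row sums $f(i)$, and its normalization $\bar m_{ij}/f(i)=\pi_{ij}$ is precisely the law of the jump chain $(Y_n)$ appearing in $\nu$; moreover the value $\lambda=\lambda_d=1/R_d-d$ is chosen exactly so that $\bar M$ has convergence parameter $1$ (cf.\ Lemma~\ref{exist} and the discussion around it), so hypothesis~(4) asserts precisely that $\bar M$ is $1$-recurrent (and $1>0$). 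Theorem~\ref{theorem 1.7} then gives $\nu_i\in(0,\infty)$ for every $i$ and $\nu\bar M=\nu$, that is $\sum_{i\ne j}\nu_i\,m_{ij}/(\lambda-m_{ii})=\nu_j$ for all $j$. Writing $\nu_i=(\lambda-m_{ii})u_i$ turns this into $\sum_{i\ne j}u_i m_{ij}=(\lambda-m_{jj})u_j$, i.e.\ $\sum_i u_i m_{ij}=\lambda u_j$, so $uM=\lambda u$; and $u_i=\nu_i/(\lambda-m_{ii})\in(0,\infty)$ since $\lambda-m_{ii}>0$. (One could instead bypass Theorem~\ref{theorem 1.7} and prove $\nu\bar M=\nu$ directly via the classical last-exit decomposition for the recurrent non-negative matrix $\bar M$.)

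The algebra above is routine; the delicate points are the two just flagged. The first is making the sojourn decomposition rigorous for the \emph{minimal} $Q$-process — accounting for paths that return to $k$ only after infinitely many jumps or that explode beforehand, justifying $\{t<\sigma_k^+\}=\bigsqcup_{0\le n<\tau_k^+}[T_n,T_{n+1})$, and invoking Tonelli for the interchange of $\mathbb{E}_k$, the sum over $n$, and the $dt$-integral (clean, since every term is non-negative, but deserving an explicit word). The second, and the genuine crux, is the input from Lemma~\ref{exist} that the convergence parameter of $\bar M$ equals $1$: this is what pins the eigenvalue in the conclusion to $\lambda$ (rather than to $1/R_{\bar M}$). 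With both in place, the finiteness $u_i\in(0,\infty)$ and the identity $uM=\lambda u$ are inherited directly from Theorem~\ref{theorem 1.7}.
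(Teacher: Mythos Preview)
Your proof is correct and takes a genuinely different route from the paper's. The paper computes the time integral by a \emph{last-jump decomposition}: it writes a renewal equation for $t\mapsto\mathbb{E}_k[e^{\int_0^t d_{X_s}ds}I_{[X_t=i,\,\sigma_k^+>t]}]$, takes Laplace transforms, identifies $u_i$ as the minimal non-negative solution of a linear system, and solves that system by iteration to obtain $u_i=(\lambda-m_{ii})^{-1}\sum_{n\ge1}{_k\bar m_{ki}^{(n)}}$; the eigenvector property then comes from Seneta's invariant-measure theorem for the recurrent matrix $\bar M$. You instead condition on the entire jump skeleton, integrate out the exponential holding times directly, and land on the same formula, but written as $(\lambda-m_{ii})^{-1}\nu_i$ with $\nu$ literally the Theorem~\ref{theorem 1.7} expression for $\bar M$ at $R=1$; the eigenvector identity is then inherited from that theorem. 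Your route makes the reduction of the Metzler case to the discrete non-negative case completely transparent and avoids the Laplace-transform/minimal-solution machinery; the paper's route is more self-contained (it does not invoke Theorem~\ref{theorem 1.7}) and brings out the renewal structure of the continuous-time process. One small remark: your appeal to Lemma~\ref{exist} for ``$\bar M$ has convergence parameter $1$'' is not what that lemma provides; but hypothesis~(4) alone (recurrence in the sense $\sum_{n\ge1}{_k\bar m_{kk}^{(n)}}=1$, as the paper uses it) already forces the convergence parameter of $\bar M$ to equal $1$ and $\bar M$ to be $1$-recurrent, which is all your argument needs.
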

\section{A Markov chain representation for non-negative matrix}
\noindent Let $A=(a_{ij})_{i,j\in E}$ be a non-negative matrix.~Assume $A$ is irreducible,~i.e.~$\forall~i,~j\in E$,~$\exists~n\geq1$ such that $a_{ij}^{(n)}>0$, where $A^n=(a_{ij}^{(n)}).$
Let $R$ be the convergence parameter  for $A$:
$$
R=\inf\{r\geq0:\sum_{n=0}^{\infty}r^na_{ij}^{(n)}=\infty\}
=\sup\{r\geq0:\sum_{n=0}^{\infty}r^na_{ij}^{(n)}<\infty\}.
$$
$A$ is called $R$-recurrent if 
$
\sum_{n=0}^{\infty}R^na_{ij}^{(n)}=\infty.
$
We will prove that when $A$ is $R$-recurrent there exists the unique $R$-invariant measure $x=(x_i)_{i\in E}:xA=x/R$ and $R$-invariant vector $y=(y_i)_{i\in E}: Ay=y/R.$
Define $_{r}a_{ij}^{(1)}=a_{ij},$ and inductively 
\begin{equation}\label{def}
	_{r}a_{ij}^{(n)}=\sum\limits_{l\not=r}{_{r}{a_{il}^{(n-1)}}}a_{lj}=\sum\limits_{i_1,\cdots,i_{n-1}\not=r}a_{ii_1}a_{i_1i_2}\cdots a_{i_{n-1}j},~~~~~i,~j,~r\in E.
\end{equation}

\begin{lemma}\label{key}
	Fix $k\in E$. If $R\in(0,\infty)$ such that $\sum\limits_{n=0}^{\infty}R^{n}a_{kk}^{(n)}=\infty$,  then 
	\begin{itemize}
		\item [(1)] $\sum_{n=1}^{\infty}R^n {_ka_{kk}^{(n)}}=1,$ and
		\item [(2)] 
		$
		\sum_{n=1}^{\infty}R^n{_ka_{ki}^{(n)}}\in (0,\infty)
		$
		for $i\in E$.
	\end{itemize}
\end{lemma}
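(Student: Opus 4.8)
The plan is to run the classical first-return (renewal) decomposition for the non-negative kernel $A$ at the state $k$, viewing $P(r):=\sum_{n\ge 0}r^na_{kk}^{(n)}$ (with the convention $a_{kk}^{(0)}=1$) as a power series in $r$ whose radius of convergence is exactly $R$ (by Cauchy--Hadamard and the fact, recorded from irreducibility, that the convergence parameter does not depend on the pair of indices), and then to let $r\uparrow R$ by monotone convergence. For assertion (1), the key algebraic identity is the splitting of a $k$-to-$k$ path at its first return time,
$$a_{kk}^{(n)}=\sum_{m=1}^{n}{_ka_{kk}^{(m)}}\,a_{kk}^{(n-m)},\qquad n\ge 1,$$
which is immediate from the definition \eqref{def} of the taboo quantities ${_ka_{kk}^{(m)}}$ (these are exactly the weighted first-return excursions of length $m$). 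Multiplying by $r^n$, summing over $n$, and rearranging the resulting Cauchy product (legitimate by Tonelli, all terms lying in $[0,\infty]$) yields $P(r)=1+F(r)P(r)$ with $F(r):=\sum_{n\ge1}r^n{_ka_{kk}^{(n)}}$. For $0<r<R$ we have $P(r)<\infty$, hence $F(r)=1-1/P(r)\in[0,1)$. Letting $r\uparrow R$, monotone convergence gives $F(r)\uparrow F(R)=\sum_{n\ge1}R^n{_ka_{kk}^{(n)}}$ and $P(r)\uparrow P(R)=\sum_{n\ge0}R^na_{kk}^{(n)}=\infty$ by $R$-recurrence, so $F(R)=\lim_{r\uparrow R}(1-1/P(r))=1$, which is (1).

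For assertion (2), positivity is easy. If $i\not=k$, let $n\ge1$ be least with $a_{ki}^{(n)}>0$ (it exists by irreducibility); any length-$n$ path realizing it cannot revisit $k$ at an intermediate time (else a strictly shorter $k$-to-$i$ path would exist), so ${_ka_{ki}^{(n)}}>0$, and since $R>0$ the series is strictly positive; for $i=k$ its value is $1$ by (1). For finiteness, fix by irreducibility a shortest path $i=i_0\to i_1\to\cdots\to i_p=k$ of length $p\ge1$; by minimality none of $i_1,\ldots,i_{p-1}$ equals $k$, so $w_0:=a_{i_0i_1}\cdots a_{i_{p-1}k}>0$. Concatenating this one fixed path onto the end of an arbitrary length-$l$ path $k\to i$ whose intermediate states avoid $k$ (these are precisely the ones counted by ${_ka_{ki}^{(l)}}$) produces a $k$-to-$k$ path of length $l+p$ whose first return to $k$ occurs at time $l+p$; this operation is injective in the first path, so summing weights gives $w_0\,{_ka_{ki}^{(l)}}\le{_ka_{kk}^{(l+p)}}$. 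Multiplying by $R^{l+p}$ and summing over $l\ge1$,
$$w_0R^p\sum_{l\ge1}R^l{_ka_{ki}^{(l)}}\le\sum_{l\ge1}R^{l+p}{_ka_{kk}^{(l+p)}}\le\sum_{m\ge1}R^m{_ka_{kk}^{(m)}}=1,$$
whence $\sum_{l\ge1}R^l{_ka_{ki}^{(l)}}\le(w_0R^p)^{-1}<\infty$.

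I expect the only real obstacle to be finiteness in (2): the naive bound ${_ka_{ki}^{(l)}}\le a_{ki}^{(l)}$ is useless, because in the $R$-recurrent case $\sum_l R^la_{ki}^{(l)}=\infty$ as well (one has $a_{ki}^{(n+s)}\ge c\,a_{kk}^{(n)}$ for suitable fixed $s,c>0$). One genuinely has to exploit the taboo structure — excursions that reach $i$ without yet having returned to $k$ embed injectively into full first-return excursions of $k$, whose total $R$-weighted mass was pinned to $1$ in part (1). A minor point that must be stated with care is that $R$ is indeed the radius of convergence of $\sum_n r^na_{kk}^{(n)}$, so that $P(r)<\infty$ for every $r<R$; this is Cauchy--Hadamard and requires no aperiodicity hypothesis.
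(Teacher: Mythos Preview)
Your proof is correct and follows essentially the same approach as the paper's: for (1) both use the first-return decomposition $a_{kk}^{(n)}=\sum_{m=1}^{n}{_ka_{kk}^{(m)}}a_{kk}^{(n-m)}$ (the paper merely states this identity and leaves the $r\uparrow R$ limit implicit, whereas you spell it out), and for (2) both bound ${_ka_{ki}^{(l)}}$ by a constant times ${_ka_{kk}^{(l+p)}}$ via concatenation with a taboo path from $i$ back to $k$. The only cosmetic difference is that the paper uses the full taboo weight ${_ka_{ik}^{(m)}}$ in place of your single shortest-path contribution $w_0$, which is the same idea with a slightly sharper constant.
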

\begin{proof}
	(1) It can be derived from $	a_{ij}^{(n)}=\sum_{m=1}^{n}{_{j}a_{ij}^{(m)}}a_{jj}^{(n-m)}$ for $n\geq 1,~i,~j\in E$.
	(2)  There is $m>0$ such that $_k{a_{ik}^{(m)}}>0$ by  irreducibility of $A$. We also have for all $n\geq1$,
	$$
		_k{a_{ki}^{(n)}}_k{a_{ik}^{(m)}}\leq {_k{a_{kk}^{(m+n)}}}.
	$$
	Multiplying both sides by $R^n$ and summing it over $n$, it follows from irreducibility and $R$-recurrence of $A$ that
	$$
		0<\sum\limits_{n=1}^{\infty}R^n{_k{a_{ki}^{(n)}}}\leq\frac{1}{R^m{_k{a_{ik}}^{(m)}}}	\sum_{n = 1}^{\infty}R^{m+n}{_k{a_{kk}^{(m+n)}}}<\frac{1}{R^m{_k{a_{ik}^{(m)}}}}<\infty.
	$$
\end{proof}

Now we are ready to prove Theorem \ref{theorem 1.7}.
\begin{proof}[Proof of Theorem~1.1]Notice that $\forall ~i\not=k$,
	\begin{equation}\label{finite}
		\begin{aligned}
			u_i=&\mathbb{E}_{k}\left(\sum_{n=1}^{\tau_k^{+}-1} \left(I_{[X_n=i]} R^{n}\prod_{m=0}^{n-1} f\left(X_m\right)\right)\right)\\
			&=\sum_{n= 1}^{\infty}R^{n}\mathbb{E}_{k}\left(I_{[X_n=i,X_m\not=k,1\leq m\leq n-1]}\prod_{m=0}^{n-1} f\left(X_m\right)\right) \\ 
			&=\sum_{n= 1}^{\infty}\sum_{i_1, \ldots, i_{n-1} \neq k}R^{n} f(k) f\left(i_1\right) \cdots f(i_{n-1})\\ &\quad\quad\quad\quad\quad\quad\quad\quad\quad\quad\quad\quad\quad\cdot\mathbb{P}_k[X_1=i_1,\cdots,X_{n-1}=i_{n-1},X_{n}=i]\\ 
			& =\sum_{n= 1}^{\infty}\sum_{i_1, \ldots, i_{n-1} \neq k} R^{n} \prod_{m=0}^{n-1}f(i_m)\mathbb{P}_k[X_{m+1}=i_{m+1}|X_m=i_m]\\
			&\quad\quad\quad\quad\quad\quad\quad\quad\quad\quad\quad\quad\quad\quad\quad\quad\quad\quad\quad\quad(\text{by Markov property})\\
			&=\sum_{n= 1}^{\infty}  R^{n} {_ka_{ki}^{(n)}},
		\end{aligned}
	\end{equation}
	where we set $i_0=k,~i_n=i$.~Combined with $u_k=1$, it follows from (\ref{finite}) and  Lemma \ref{key}  that $0<u_i<\infty$ for any $i\in E$. Then
	\begin{equation}\label{method}
		\begin{aligned}
			\sum_{i\in E}u_i a_{ij}
			%			&= \sum_{i\in E} \mathbb{E}_{k}\left(\sum_{n=0}^{\tau_k^{+}-1} \left(I_{[X_n=i]} R^{n}\prod_{m=0}^{n-1} f\left(X_m\right)\right)\right)f(i)\mathbb{P}[X_{n+1}=j|X_n=i]\\		
			& =\sum_{i\in E}\sum_{n = 0}^{\infty} \mathbb{E}_{k}\left(I_{[\tau_{k}^{+}>n]} R^{n}\prod_{m=0}^{n-1} f\left(X_m\right)  f(i) \mathbb{P}[X_{n+1}=j|\mathscr{F}_{n}] I_{[X_n=i]}\right) \\
			&= \sum_{i\in  E}\sum_{n = 0}^{\infty} \mathbb{E}_{k}\left(I_{[\tau_{k}^{+}>n]} R^{n}\prod_{m=0}^n f\left(X_m\right) I_{[X_n=i]} I_{[X_{n+1}=j]}\right) \\
			%			&= \mathbb{E}_{k}\left(\sum_{n=0}^{\tau_{k}^{+}-1} \left(I_{[X_{n+1}=j]} R^{n}\prod_{m=0}^n f\left(X_m\right)\right)\right) \\
			&=(1/R)\mathbb{E}_{k}\left(\sum_{n=1}^{\tau_{k}^{+}} \left(I_{[X_{n}=j]} R^{n}\prod_{m=0}^{n-1} f\left(X_m\right)\right)\right),
		\end{aligned}
	\end{equation}
	where $\mathscr{F}_{n}=\sigma\{X_m,m\leq n\}.$~Then we need to verify that the last expectation of (\ref{method}) is equal to $u_j$.
	For $j\not=k$,~there is no possibility $X_0=j$ or $X_{\tau_{k}^{+}}=j$.~So we have 
	\begin{equation}\label{j}
		\sum_{i\in E}u_i a_{ij}=(1/R)u_j,~~~~~j\not=k.
	\end{equation}
	\noindent For $j=k$, using the same method as (\ref{finite}), we have
	$$
	\mathbb{E}_{k}\left(\sum_{n=1}^{\tau_{k}^{+}} \left(I_{[X_{n}=k]} R^{n}\prod_{m=0}^{n-1} f\left(X_m\right)\right)\right)
	=\sum_{n= 1}^{\infty} R^n{_ka_{kk}^{(n)}}=1.~~~(\text{by Lemma~\ref{key}})
	$$
	It follows from  $u_k=1$ that
	\begin{equation}\label{k}
		\sum\limits_{i\in E}u_ia_{ik}=(1/R)\mathbb{E}_{k}\left(\sum_{n=1}^{\tau_{k}^{+}} \left(I_{[X_{n}=k]} R^{n}\prod_{m=0}^{n-1} f\left(X_m\right)\right)\right)=(1/R)u_k.
	\end{equation}
	From (\ref{j}) and (\ref{k}) we obtain
	$$
	\sum\limits_{i\in E}u_ia_{ij}=(1/R)u_j,~~~j\in E.
	$$	
	Similarly we can get 
	$$y_i=\mathbb{E}_{i}\left(\sum_{n=1}^{\tau_k^{+}} \left(I_{[X_n=k]} R^{n}\prod_{m=0}^{n-1} f\left(X_m\right)\right)\right)\in(0,\infty),~~~i\in E$$
	and $y=(y_i)_{i\in E}$ satisfies $Ay=y/R$.
\end{proof}
\noindent We note that 
\begin{equation}\label{rk}
	\sum_{i\in E}u_{i}=\mathbb{E}_k\left(\sum\limits_{n=0}^{\tau_{k}^{+}-1}\left(R ^{n}\prod\limits_{m=0}^{n-1}f(X_m)\right)\right).
\end{equation}
%To ensure that $(u_i)_{i\in E}$ is summable, we shall assume that for some $k$,
%\begin{equation}
%	\mathbb{E}_k\left(\sum\limits_{n=0}^{\tau_{k}^{+}-1}\left(R ^{n}\prod\limits_{m=0}^{n-1}f(X_m)\right)\right)<\infty.
%\end{equation} 
%Then the normalized  $u$ of $A$ is given by the formula
%\begin{equation}\label{uk}
%	\forall~ k\in E,~~~~
%	u_k=\frac{1}{\mathbb{E}_k\left(\sum\limits_{n=0}^{\tau_{k}^{+}-1}\left(R ^{n}\prod\limits_{m=0}^{n-1}f(X_m)\right)\right)}.
%\end{equation}
If $A$ is  a stochastic matrix with $R=1$, then (\ref{rk}) indicates that $A$ is positive recurrent if $\mathbb{E}_k\tau_{k}^{+}<\infty$. 
But it should be noted that in the stochastic case it is possible for the chain to be  $R$-recurrent with $R>1$.~Take the simple random walk on a line as an example.
\section{Representation for Metzler matrix}
This section is devoted to representing $u$ for Metzler matrix. Recall the definition of $\bar{M}$.  \begin{equation}
	\bar{m}_{ij}=\left\{
	\begin{aligned}
		\frac{m_{ij}}{\lambda-m_{ii}},~~~&i\not=j,\\
		0,~~&i=j.
	\end{aligned}
	\right.
\end{equation} 
\begin{lemma}\label{exist}
	Assume that $M=(m_{ij})$ is is irreducible Metzler matrix on $E$ and $d=\sup_{i\in E}m_{ii}<\infty$. Then  $\lambda=1/R_d-d>m_{ii},~i\in E$, where $R_d$ is the convergence parameter of $A_d=M+dI$.
\end{lemma}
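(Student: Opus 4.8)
The plan is to translate the claim into a strict bound on the convergence parameter $R_d$, and then to extract that bound from a first-return (taboo) generating function at the state $i$, where irreducibility will supply strictly more weight at $i$ than the self-loop alone.

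Fix $i\in E$ and recall that $A_d=M+dI$ is the non-negative irreducible matrix whose convergence parameter is $R_d$; its diagonal entries are $c_j:=(A_d)_{jj}=m_{jj}+d\ge 0$, and I write $c:=c_i\ge 0$. Since $\lambda=1/R_d-d$, the inequality $\lambda>m_{ii}$ is exactly $1/R_d>m_{ii}+d=c$, i.e. $R_d<1/c$ (read as $R_d<\infty$ when $c=0$). So I only need this strict bound. The tool is the renewal decomposition at $i$: set $f^{(n)}:={_i(A_d)_{ii}^{(n)}}$, the total weight of length-$n$ paths from $i$ to $i$ that avoid $i$ in between (so $f^{(1)}=(A_d)_{ii}=c$), and $F(r):=\sum_{n\ge 1}r^nf^{(n)}$. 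From the identity $(A_d)_{ii}^{(n)}=\sum_{m=1}^n f^{(m)}(A_d)_{ii}^{(n-m)}$ (already invoked in the proof of Lemma~\ref{key}(1)) one gets, for $0\le r<R_d$, $P(r):=\sum_{n\ge 0}r^n(A_d)_{ii}^{(n)}=1/(1-F(r))$; since $P(r)\ge 1$ there, $F(r)<1$ on $[0,R_d)$, and letting $r\uparrow R_d$, monotone convergence gives $F(R_d)\le 1$. This is standard Vere--Jones/Seneta theory and needs no recurrence hypothesis.

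Next I produce a nontrivial first-return term. Assuming $|E|\ge 2$ (the case $|E|=1$ is degenerate and, strictly speaking, makes the statement fail, so it must be tacitly excluded), choose $j\ne i$ with $m_{ij}>0$; by irreducibility there is a path from $j$ to $i$ of some length $p\ge 1$ with positive weight, and concatenating it after the edge $i\to j$ and truncating at the first return to $i$ yields a first-return path of some length $q\ge 2$ and weight $w>0$ (a sub-path of a positive-weight path still has positive weight). Hence $f^{(q)}\ge w$, and because every $f^{(n)}\ge 0$,
$$F(r)\ \ge\ c\,r+w\,r^{q}\qquad(r\ge 0).$$
Evaluating at $r=1/c$: if $c>0$ then $F(1/c)\ge 1+wc^{-q}>1\ge F(R_d)$, and since $F$ is non-decreasing this forces $R_d<1/c$; if $c=0$ then $F(r)\ge wr^{q}\to\infty$ as $r\to\infty$, which together with $F(R_d)\le 1$ forces $R_d<\infty$. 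Either way $1/R_d>c=m_{ii}+d$, i.e. $\lambda=1/R_d-d>m_{ii}$, and $i\in E$ was arbitrary.

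If one prefers to avoid citing the renewal theory, the same conclusion follows by bounding $(A_d)_{ii}^{(n)}$ from below by the weight of all interleavings of self-loops at $i$ with copies of the fixed $q$-cycle, namely $(A_d)_{ii}^{(n)}\ge\sum_{k}\binom{n-k(q-1)}{k}w^k c^{\,n-kq}$, and observing that $\sum_n r^n(A_d)_{ii}^{(n)}\ge\frac{1}{1-cr}\sum_k\big(\frac{wr^{q}}{1-cr}\big)^k$ diverges once $cr+wr^{q}>1$, which happens for $r$ just below $1/c$. I expect the only delicate points to be these boundary cases ($c=0$ and $|E|=1$) and the clean statement of $F(R_d)\le 1$; the heart of the argument---manufacturing $f^{(q)}>0$ from irreducibility and substituting $r=1/c$ into the lower bound---is short.
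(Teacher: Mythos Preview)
Your argument is correct, and it takes a genuinely different route from the paper's. The paper does not work with the first-return generating function at all: it simply invokes Seneta's theorem that an irreducible non-negative matrix always admits a strictly positive $R$-subinvariant measure $u$ with $uA_d\le(1/R_d)u$, rewrites this as $uM\le\lambda u$, and reads off the $j$-th coordinate. Irreducibility (tacitly with $|E|\ge 2$) supplies some $i\ne j$ with $m_{ij}>0$, so $u_jm_{jj}<\sum_{i}u_im_{ij}\le\lambda u_j$ and hence $m_{jj}<\lambda$. Thus the paper treats the subinvariant measure as a black box and gets the strict inequality in one line, while you stay at the combinatorial level, bound $F(r)\ge c_ir+wr^{q}$ from a hand-built first-return cycle, and combine this with $F(R_d)\le 1$ to force $R_d<1/c_i$. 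Your version is more self-contained (it needs nothing beyond the renewal identity already used in Lemma~\ref{key}) and makes transparent \emph{why} the inequality is strict; the paper's version is shorter and shows that strictness is really just positivity of $u$ plus one off-diagonal entry. Your observation that $|E|=1$ must be excluded applies equally to the paper's argument.
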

\begin{proof}
$A_d=M+dI$ is non-negative and irreducible on $E$ following from the irreducibility of $M$.
	It follows from \cite{seneta2006non} that there always exists positive $R_d$-invariant measure $x$ such that $uA_d\leq1/R_du$, where $R_d$ is the convergence parameter of $A_d$. That is 
	$$uM\leq(1/R_d-d)u.$$ 
Obviously $u_jm_{jj}<\sum_{i\in E}u_im_{ii}\leq (1/R_d-d)u_j,~j\in E.$
\end{proof}
\begin{proof}[Proof of Theorem 1.3]
Combined with assumption (4), it follows from (\ref{def}) and \cite[Theorem 6.2]{seneta2006non} that $x_i=\sum_{n\geq 1}{_jm_{ji}^{(n)}},~i\in E$ and
 $x=(x_i)_{i\in E}$ satisfies 
 \begin{equation}\label{invariant}
 	x\bar{M}=x.
 \end{equation}
According to  decomposition of the last jump and Markov property, it follows that
$$\mathbb{E}_j[e^{\int_{0}^{t}d_{X_s}ds}I_{[X_t=i,~\sigma_j^+>t]}]=\sum_{k\not=j,~i}q_{ki}\int_{0}^{t}e^{(d_i-q_i)(t-r)}\mathbb{E}_j[e^{\int_{0}^{r}d_{X_s}ds}I_{[X_r=k,~\sigma_j^+>r]}]dr+\delta_{ij}e^{(d_j-q_j)}t.$$
Taking Laplace transform,
$\mathbb{E}_j\int_{0}^{\infty}e^{-\lambda t+\int_{0}^{t}d_{X_s}ds}I_{[X_t=i,~\sigma_j^+>t]}dt,~i\in E$ is the minimal non-negative solution of
\begin{equation}\label{solution}
	y_i=\sum_{k\not=j,~i}y_k\frac{m_{ki}}{\lambda-m_{ii}}+\frac{\delta_{ij}}{\lambda-m_{jj}}.
\end{equation}
	Set $$y_i^{(1)}=\frac{\delta_{ij}}{\lambda-m_{jj}},~y_{i}^{(n+1)}=\sum\limits_{k\not=j,~i}y_{k}^{(n)}\frac{q_{ki}}{\lambda-m_{ii}},~n\geq 1.$$
Inductively,~$y_i^{(n)}=\frac{1}{\lambda-m_{ii}}{_{j}\bar m_{ji}^{(n-1)}}$.
Using the second iteration method (cf. \cite{hou2012homogeneous}), 
$$y_i=\sum_{n=1}^{\infty}y_i^{(n)}=\frac{1}{\lambda-m_{ii}}\sum_{n=0}^{\infty}{_{j}\bar m_{ji}^{(n)}}$$
is the minimal solution of (\ref{solution}).
It follows from the uniqueness of minimal non-negative solution and recurrence of $\bar{M}$($\sum_{n\geq1}{_{j}\bar m_{jj}^{(n)}}=1$) that
\begin{equation}\label{unique}
	\mathbb{E}_j\int_{0}^{\infty}e^{-\lambda t+\int_{0}^{t}d_{X_s}ds}I_{[X_t=i,~\sigma_j^+>t]}dt=\frac{1}{\lambda-m_{ii}}\sum_{n=1}^{\infty}{_{j}\bar m_{ji}^{(n)}}.
\end{equation}
This completes the proof  follows from (\ref{invariant}) and (\ref{unique}).
\end{proof}

\bibliographystyle{cas-model2-names}
%%
%%% Loading bibliography database
\bibliography{rerefer.bib}
\end{document}